\theoremstyle{plain}
\newtheorem{thm}{Theorem}
\theoremstyle{plain}
\newtheorem{conjecture}[thm]{Conjecture}
\theoremstyle{plain}
\newtheorem{lem}[thm]{Lemma}
   \newenvironment{proof}[1][\proofname]{\par
     \normalfont\topsep2\p@\@plus2\p@\relax
     \trivlist
     \itemindent\parindent
     \item[\hskip\labelsep
           \scshape
       #1]\ignorespaces
   }{%
     \endtrivlist\@endpefalse
   }
   \providecommand{\proofname}{Proof}
\theoremstyle{plain}
\newtheorem{prop}[thm]{Proposition}
\begin{document}

\global\long\def\comment#1{}
\global\long\def\hs{\enspace}
 \global\long\def\hhs{\thinspace}
 \global\long\def\real{\ifmmode{\rm R} \else${\rm R}$ \fi}





\global\long\def\edge{\leftrightarrow}
 \global\long\def\noedge{\not\leftrightarrow}
 \global\long\def\twoedge{\Leftrightarrow}
 \global\long\def\to{\rightarrow}
 \global\long\def\Hrl{H_{l+1}^{(r)}}
 \global\long\def\Krl{{\cal K}_{l}^{(r)}}
 \global\long\def\Krl{{\cal K}_{l+1}^{(r)}}
 \global\long\def\cF{{\cal F}}
 \global\long\def\cG{{\cal G}}
 \global\long\def\cH{{\cal H}}
 \global\long\def\cA{{\cal A}}

\global\long\def\e{\varepsilon}
 \global\long\def\bF{ {\cal {\bf F}}}
 \global\long\def\odel{o_{\delta}}
 \def\od 1{o_{\delta}(1)}

\def\oe 1{o_{\varepsilon}(1)} \global\long\def\aF{\alpha_{F}}
 \global\long\def\bF{\beta_{F}}
 \global\long\def\gF{\gamma_{F}}

\baselineskip=18pt
\parskip=8pt

\title{\textbf{Toward a Hajnal-Szemer\'edi theorem for hypergraphs}}

\author{H. A. Kierstead \thanks{School of Mathematical and Statistical Sciences, Arizona State University,
Tempe, AZ 85287, USA.
email: kierstead@asu.edu;
research supported in part by NSA grant MDA 904-03-1-0007 and NSF
grant DMS-0901520.} \quad{}and \quad{}Dhruv Mubayi %
\thanks{ Department of Mathematics, Statistics, and Computer Science, University
of Illinois, Chicago, IL 60607. email: mubayi@math.uic.edu; research
supported in part by NSF grants DMS-0653946 and DMS-0969092. \protect \\
 2000 Mathematics Subject Classification:05C15, 05C65, 05C85, 05D40 
%
}}

\date{\today}

\maketitle

\begin{abstract}
 Let $\cH$
be a triple system with maximum degree $d>1$ and let $r>10
^7\sqrt{d}\log^{2}d$.
Then $\cH$ has a proper vertex coloring with $r$ colors such that
any two color classes differ in size by at most one. The bound on
$r$ is sharp in order of magnitude apart from the logarithmic factors.  Moreover, such an $r$-coloring can be found via a randomized algorithm whose expected running time is polynomial in the number of vertices of $\cH$.

 This is the first result  in the direction of generalizing the Hajnal-Szemer\'edi theorem to hypergraphs.
\end{abstract}

\section{Introduction}

One of the basic facts of graph coloring is that every graph $G$ with
maximum degree $d$ has chromatic number at most $d+1$. An equitable $r$-coloring of $G=(V,E)$ is proper coloring with $r$-colors for which each color
class has size $\lfloor |V|/r\rfloor$ or $\lceil |V|/r\rceil$.  A much deeper
result is:
\begin{thm}[Hajnal-Szemer\'edi \cite{Hajnal}]
For every integer $r$ and graph $G$ with maximum degree $d$, if $d<r$ then $G$ has an equitable $r$-coloring.
\end{thm}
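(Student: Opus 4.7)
The plan is to prove this by induction on $n = |V(G)|$, coupled with a local exchange argument. First, we may assume $r$ divides $n$ by padding $G$ with isolated vertices; this does not change the maximum degree, and an equitable $r$-coloring of the padded graph restricts to one of $G$. Set $s = n/r$, so the goal is to partition $V(G)$ into $r$ independent sets of size exactly $s$.

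For the inductive step, remove a vertex $v$ and apply the inductive hypothesis (on a suitably modified graph) to $G - v$ to obtain a near-equitable $r$-coloring. Re-inserting $v$ yields a proper $r$-coloring $(V_1, \ldots, V_r)$ with one \emph{overfull} class of size $s+1$, one \emph{underfull} class of size $s-1$, and all others of size $s$. The task reduces to rebalancing this coloring into a fully equitable one.

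The rebalancing is driven by an auxiliary digraph $D$ on the color classes: put an arc $V_i \rightarrow V_j$ whenever some vertex $u \in V_i$ has no neighbor in $V_j$, so that $u$ can be moved from $V_i$ to $V_j$ while preserving properness. If the underfull class is reachable in $D$ from the overfull class along a directed path $V_{i_0} \rightarrow V_{i_1} \rightarrow \cdots \rightarrow V_{i_k}$, then executing the corresponding chain of moves decreases $|V_{i_0}|$ by one and increases $|V_{i_k}|$ by one while keeping every intermediate class size unchanged, which produces an equitable $r$-coloring.

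The main obstacle is guaranteeing that such a directed path always exists; this is where the hypothesis $d < r$ is used decisively. If no such path exists, let $A$ be the set of classes reachable from the overfull class in $D$; then $A$ misses the underfull class, and every vertex in every class of $A$ must have at least one neighbor in every class of $V(G) \setminus A$ outside the overfull class (otherwise an arc would escape $A$). A double count of edges between $A$ and its complement, combined with $\Delta(G) \leq r-1$, either gives a direct contradiction or, via a finer analysis isolating the \emph{solo} vertices in each class that witness the arcs of $D$, lets us iteratively refine the coloring to enlarge $A$ and eventually absorb the underfull class. This rigidity/discharging analysis is the technical heart of the Hajnal-Szemer\'edi theorem, and is precisely the step that cannot be carried out by a greedy or naive exchange argument.
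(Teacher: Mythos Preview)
The paper does not prove this theorem at all; it is stated with a citation to \cite{Hajnal} and used as a black box throughout (see also the references to the short proofs and algorithms \cite{KK-HSz, MSz, KKMS}). There is therefore no ``paper's own proof'' to compare your proposal against.

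As for the proposal itself: what you have written is an outline of the Kierstead--Kostochka short-proof strategy, not a proof. The first two paragraphs set up the standard framework correctly (reduce to $r\mid n$, pull out a vertex, get a nearly-equitable coloring with one overfull and one underfull class, build the auxiliary digraph on color classes). But your final paragraph explicitly punts on the only nontrivial step: you say the double count ``either gives a direct contradiction or, via a finer analysis isolating the solo vertices \ldots, lets us iteratively refine the coloring,'' and then immediately concede that ``this rigidity/discharging analysis is the technical heart of the Hajnal--Szemer\'edi theorem.'' That is not a proof; it is a description of where the proof would go. The crude double count between $A$ and its complement does \emph{not} by itself yield a contradiction --- the whole content of the theorem lies in the structural analysis of solo vertices, the recoloring moves that enlarge the reachable set, and the potential-function argument showing the process terminates. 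If you intend to submit this as a proof you must actually carry out that analysis; as written, the proposal asserts the conclusion rather than deriving it.
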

The original proof was quite complicated,
and did not yield a polynomial time algorithm for producing the coloring, but
recently Mydlarz and Szemer\'edi \cite{MSz}, and independently Kierstead
and Kostochka \cite{KK-HSz}, found simpler proofs that did yield
polynomial time algorithms. See \cite{KKYbc} for an even simpler
proof. These ideas were combined in \cite{KKMS} to obtain an $O(r|V|^{2})$
time algorithm. Kierstead and Kostochka \cite{KK-O} also strengthened
the Hajnal-Szemer\'edi Theorem by weakening the degree constraint---if
$d(x)+d(y)\leq2r+1$ for every edge $xy$ then $G$ has an equitable
$(r+1)$-coloring.

A $k$-uniform hypergraph ($k$-graph
for short) is a hypergraph whose edges all have size $k$. A proper coloring of a hypergraph is a coloring of its vertices with
no monochromatic edge. Hypergraph coloring has a long history beginning with the seminal results of Erd\H os~\cite{E63, E64} about the minimum number of edges in a $k$-graph that is not 2-colorable.  Apart from giving rise to many of the (still open) major problems in combinatorics, attempts to answer questions in this area have led to fundamental new proof methods, most notably the semi-random or nibble method  and the Lova\'sz Local Lemma~\cite{EL}.
In \cite{EL}, Erd\H{o}s and Lov\'asz obtained, as a corollary to the Local Lemma, that every $k$-graph with maximum degree $d$ has chromatic number at most $3d^{1/(k-1)}$.

In this paper we merge these two important areas of research by studying equitable colorings of hypergraphs.
The situation for hypergraphs is much more complicated. First, we
do not even know sharp bounds for chromatic number in terms of maximum
degree.  Our results deal only with 3-graphs.
An easy consequence of the Local Lemma is that every 3-graph
with maximum degree $d$ has a proper coloring with at most $\sqrt{3ed}=(2.85..)\sqrt{d}$
colors. There appears to be no proof of this that does not use the
Local Lemma. On the other hand, complete 3-graphs show that one needs
at least $\sqrt{d/2}>(0.707..)\sqrt{d}$ colors. It remains an open
problem to obtain the best constant here.

As the above discussion indicates, it is premature to hope for a tight
analogue of the Hajnal-Szemer\'edi theorem for 3-graphs. Nevertheless,
we begin to address the question in this paper. We prove:
\begin{thm}
\label{main} Let
$d\ge2$ and $r$ be integers satisfying $r>10^7\sqrt{d}\log^{2}d$.  Then every $n$ vertex $3$-graph with
maximum degree $d$ has an equitable $r$-coloring. Moreover, such an $r$-coloring can be found via a randomized algorithm whose expected running time is polynomial in $n$.
\end{thm}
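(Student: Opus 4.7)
The plan is to combine a randomized near-equitable coloring with a deterministic rebalancing phase, in the spirit of Kierstead--Kostochka's algorithmic proofs of Hajnal--Szemer\'edi but adapted to the triple-system setting. In the first phase I would assign each vertex of $\cH$ an independent uniform color from $[r]$. For each edge $e$ the probability that $e$ is monochromatic is $1/r^2$, and this event is independent of all but the at most $3d$ analogous events for edges sharing a vertex with $e$. Since $r\gg \sqrt{d}$, the symmetric Lov\'asz Local Lemma applies and the Moser--Tardos resampling algorithm produces a proper coloring in expected polynomial time. A martingale concentration inequality (Azuma/McDiarmid) shows that with high probability every class has size $n/r \pm O\!\left(\sqrt{(n/r)\log n}\right)$, so this initial coloring is approximately equitable; if concentration fails we restart, keeping the expected running time polynomial.

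In the second phase I would rebalance. Label the classes by their excess or deficit relative to $\lfloor n/r\rfloor$. To correct an excess class $V_i$ and a deficit class $V_j$ we try to recolor some $x\in V_i$ with color $j$. The obstruction is a pair $\{y,z\}\subseteq V_j$ with $\{x,y,z\}\in\cH$; each $x$ has at most $d$ such blocking pairs, distributed over the $r$ classes. When a direct move is blocked, we search for an augmenting chain in an auxiliary digraph whose arc $V_k\to V_\ell$ certifies a vertex of $V_k$ that can be safely recolored into $V_\ell$; we terminate when the chain reaches a deficit class. Each successful chain decreases the total imbalance by one, so after $O(n)$ such augmentations the coloring becomes equitable.

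The crux, and the step I expect to be the main obstacle, is a hypergraph augmenting-chain lemma: in any proper $r$-coloring of $\cH$ having an overfull class $V_i$ and an underfull class $V_j$, there is a chain from $V_i$ to $V_j$ of bounded length, provided $r > c\sqrt{d}\log^2 d$. In the graph case, each vertex has at most $d$ ``forbidden" classes and accessibility arguments succeed with $r\ge d+1$; in the 3-graph case the forbidden structure for $x$ is a family of pairs sitting in other classes, whose joint behavior across classes is considerably harder to control. I would classify classes as \emph{directly solvable}, \emph{solvable via a short chain}, or \emph{stuck}, and then use a potential-function argument---essentially a double counting of blocker pairs over the near-uniform class sizes---to bound the number of stuck classes. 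The two logarithmic factors in the threshold should enter respectively from controlling the length of the chains and from a union bound over the $O(n)$ rebalancing steps combined with the concentration error from Phase 1.

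Putting the pieces together, Phase 1 terminates in expected polynomial time by Moser--Tardos, and Phase 2 performs at most $O(n)$ recolorings, each one a polynomial-time search over an auxiliary digraph on $r$ nodes, so the overall expected running time is polynomial in $n$. The main technical work---and the reason a theorem of this strength is nontrivial even given the large constant and the logarithmic slack---lies entirely in the augmenting-chain lemma for triple systems.
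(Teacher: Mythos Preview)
Your proposal has a genuine gap at precisely the place you flag as ``the crux'': the hypergraph augmenting-chain lemma is asserted, not proved, and there is no known argument along the lines you sketch. In the graph case the Kierstead--Kostochka augmenting argument works because a vertex of degree $d$ is forbidden from at most $d$ classes, so with $r\ge d+1$ there is always a legal move. For a $3$-graph, a vertex $x$ is forbidden from class $j$ whenever some edge $\{x,y,z\}$ has $y,z\in V_j$; with $d$ edges through $x$ this can block up to $d$ distinct classes, while you only have $r\sim\sqrt{d}\,\log^{2}d\ll d$ colors. So a single vertex may be movable to no class at all, and the auxiliary digraph you describe can have sources of out-degree zero inside the overfull class. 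Your proposed potential/double-counting fix is plausible \emph{in expectation} (a random coloring blocks about $d/r$ classes per vertex), but the rebalancing phase changes the coloring adversarially with respect to that average, and you provide no mechanism to preserve the bound through $O(n)$ recoloring steps. A secondary problem in Phase~1: after Moser--Tardos resampling the output is no longer distributed as a uniform random coloring, so you cannot invoke Azuma/McDiarmid on it directly; you would have to bound the total number of resampled vertices and argue separately that class sizes remain near $n/r$.

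The paper avoids the hypergraph augmenting-chain question entirely by \emph{reducing to the graph Hajnal--Szemer\'edi theorem}. It first uses the Local Lemma (together with the Kim--Vu polynomial concentration inequality, which is where the $\log^{2}d$ enters) to partition $V$ into $t=\lceil\sqrt{d}\,\rceil$ blocks $X_1,\dots,X_t$ and to build a cover graph $H$ with $\Delta(H[X_i])<p=10^{5}\log^{2}d$ for every $i$. Since $r\ge 100\,pt$, each block can then be equitably partitioned into independent $s$-sets by applying the ordinary (graph) Hajnal--Szemer\'edi theorem to $H[X_i]$. Two further steps---a second Local Lemma application to absorb the vertices of undersized blocks into existing color classes, and a deterministic shifting procedure to fix divisibility by $s$---complete the proof. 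Thus the two logarithmic factors come from the degree bound on the cover graph, not from chain lengths or union bounds over rebalancing steps, and the heavy lifting is done by the graph theorem rather than by any hypergraph analogue of it.
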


It remains an open problem to find a deterministic polynomial time algorithm above.

The bound on $t$ in the theorem is likely not best possible, and we make the following conjecture.
\medskip

\begin{conjecture}
Let
$d$ and $r$ be integers satisfying $r>2.86\sqrt{d}$. Then every $3$-graph with maximum degree
$d$ has an equitable $r$-coloring.
\end{conjecture}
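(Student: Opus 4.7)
Since the statement is only conjectured, my plan is necessarily speculative; I outline the most natural line of attack and identify the main obstacle.

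The constant $2.86 \approx \sqrt{3e}$ is the Erd\H{o}s-Lov\'asz threshold for proper $r$-coloring of a 3-graph of maximum degree $d$ via the Local Lemma, so the conjecture asserts that equitability comes ``for free'' whenever LLL already guarantees a proper coloring. The plan is to start with such a proper coloring $\chi$ (realized algorithmically by Moser-Tardos) and then iteratively rebalance it by single-vertex recolorings in the Hajnal-Szemer\'edi tradition, following the modular augmenting-tree arguments of Kierstead-Kostochka and Mydlarz-Szemer\'edi.

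Concretely, while $\chi$ is not equitable, pick a heavy class $A$ (of size $\geq\lceil n/r\rceil+1$) and a light class $B$ (of size $\leq\lfloor n/r\rfloor-1$); construct an auxiliary digraph on the color classes with an arc $V_c \to V_{c'}$ whenever some $v\in V_c$ has no $v$-link (pair $\{u,w\}$ with $\{v,u,w\}\in\cH$) lying inside $V_{c'}$; grow a BFS tree from $A$ and argue that within $O(r)$ rounds it reaches $B$, producing an augmenting sequence of recolorings that transfers one vertex from $A$ to $B$ while preserving every intermediate class size. The heart of the approach is a \emph{swap lemma}: from any proper near-equitable coloring, every heavy class has enough outgoing arcs in this digraph. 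Each vertex $v$ has at most $d$ forbidden pairs distributed among the $\binom{r-1}{2}$ off-color pairs of classes, so in a random partition the expected number of blocking pairs in a given target class is $O(d/r^2)$; at the LLL threshold $r^2 > 3e\,d$ this expectation is bounded by roughly $1/(3e) \approx 0.12$.

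The principal obstacle---and precisely what forces the polylogarithmic factor in the paper's main theorem---is that this expected bound is \emph{not} $o(1)$. In the graph case a vertex is blocked only by individual vertices, so $r>d$ yields a free target color deterministically by pigeonhole; in the 3-graph case the block count per class is $\Theta(1)$ in expectation, and admissibility is only probabilistic. A proof at the tight bound $r > 2.86\sqrt{d}$ therefore appears to require replacing pigeonhole by a genuinely probabilistic swap lemma that controls the joint distribution of blocking pairs across many vertices and many classes simultaneously along a growing augmenting tree---plausibly through an ``LLL-on-LLL'' construction, or an entropy-compression resampling scheme (in the spirit of Moser) that locally rebuilds $\chi$ along failed augmenting attempts, or an initial pre-processing step that enforces an edge-regular distribution across the classes. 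Preserving the tight constant $\sqrt{3e}$ while doing so is, I expect, the true heart of the difficulty and the reason the conjecture remains open.
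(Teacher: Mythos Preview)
The statement is a conjecture in the paper, not a theorem; the paper offers no proof, and you correctly acknowledge that your plan is necessarily speculative. Your identification of $2.86\approx\sqrt{3e}$ as the Erd\H{o}s--Lov\'asz Local Lemma threshold and of the $\Theta(1)$ expected blocking count per target class at that threshold is accurate, and it is precisely this obstacle that forces the $\log^{2}d$ overhead in the paper's main theorem.

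It is worth noting that your proposed line of attack---start from a proper LLL coloring and rebalance via an augmenting-tree argument in the Kierstead--Kostochka style---is structurally different from the route the paper takes for its (weaker) main result. The paper never begins with a proper $r$-coloring at all; instead it first partitions $V$ into $t\approx\sqrt{d}$ coarse blocks $X_i$ via the Local Lemma so that a carefully chosen cover digraph $H$ has small out-degree into each block, then applies the \emph{graph} Hajnal--Szemer\'edi theorem inside blocks and pairs of blocks, with a final divisibility-fixing pass that shifts small sets between consecutive $X_i$. The polylogarithmic loss enters there through the Kim--Vu concentration needed in the first LLL. Your rebalancing strategy is a natural alternative and arguably closer in spirit to the graph Hajnal--Szemer\'edi proof, but as you yourself diagnose, replacing the pigeonhole step by a probabilistic swap lemma that works at the tight threshold $r^{2}>3e\,d$ is the crux, and nothing in the paper indicates how to close that gap; the conjecture remains open.
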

Our approach does not seem to extend to $k$-graphs for $k\ge4$.
Nevertheless, we believe a conjecture similar to the one above holds
for $k\ge4$ as well (see Conjecture \ref{k}).
\bigskip

\textbf{Notation and terminology}.
A $k$-edge or $k$-set, is an edge or set of size $k$.
We associate a hypergraph with its edge set, and refer to $3$-edges
as triads. Fix a hypergraph $\mathcal{H}$ on a vertex set $V$. A
\emph{cover} of $\mathcal{H}$ is a graph $H$ such that every triad
of $\mathcal{H}$ contains an edge of $H$. In this case, every proper
coloring of $H$ is a proper coloring of $\mathcal{H}$. For any $v\in V$,
let $L_{v}=\{xy:vxy\in\cH\}$ be the link graph of $v$, and set $L=\bigcup_{v\in V}L_{v}$. For a graph $G$ let $E_{G}(A,B)$ denote the set of edges of $G$ with one end in $A$ and the other end in $B$. If $G$ is a digraph $\vec E_{G}(A,B)$ denotes the set of diedges with tail in $A$ and head in $B$.

\section{Probabilistic tools}

We will use the Local Lemma \cite{EL} in the (standard) form below:
\begin{thm}
\textbf{(Local Lemma)} \label{ll} Let $\cA_{1},\ldots,\cA_{n}$ be
events in an arbitrary probability space. Suppose that each event
$\cA_{i}$ is mutually independent of a set of all the other events
$\cA_{j}$ but at most $d$, and that $P(\cA_{i})<p$ for all $1\le i\le n$.
If $ep(d+1)<1$, then with positive probability, none of the events
$\cA_{i}$ holds.
\end{thm}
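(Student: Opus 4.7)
The plan is the classical induction due to Erd\H os and Lov\'asz. I would prove by induction on $|S|$ the auxiliary statement
\[
  P\Bigl(\cA_i \;\Big|\; \bigcap_{j\in S}\overline{\cA_j}\Bigr) \;\le\; \frac{1}{d+1}
\]
for every index $i$ and every $S \subseteq \{1,\ldots,n\}\setminus\{i\}$. Once this is in hand, the theorem follows from the chain rule: ordering the events arbitrarily and setting $B_i = \bigcap_{j<i}\overline{\cA_j}$,
\[
  P\Bigl(\bigcap_{i=1}^n \overline{\cA_i}\Bigr) \;=\; \prod_{i=1}^n P(\overline{\cA_i}\mid B_i) \;\ge\; \Bigl(\frac{d}{d+1}\Bigr)^n \;>\; 0.
\]
The base case $S=\emptyset$ is immediate from $p < 1/(e(d+1)) < 1/(d+1)$.

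For the inductive step I would split $S = S_1 \cup S_2$, where $S_1$ collects the indices $j$ for which $\cA_i$ is \emph{not} independent of $\cA_j$ (so $|S_1|\le d$) and $S_2 = S\setminus S_1$. Writing $B = \bigcap_{j\in S_1}\overline{\cA_j}$ and $C = \bigcap_{j\in S_2}\overline{\cA_j}$, Bayes' rule gives
\[
  P(\cA_i \mid B\cap C) \;=\; \frac{P(\cA_i\cap B\mid C)}{P(B\mid C)}.
\]
The numerator is at most $P(\cA_i\mid C) = P(\cA_i) < p$, using that $\cA_i$ is mutually independent of the events indexed by $S_2$. For the denominator, enumerate $S_1=\{j_1,\ldots,j_m\}$ with $m\le d$ and apply the chain rule a second time; each factor $P(\overline{\cA_{j_k}} \mid \overline{\cA_{j_1}}\cap\cdots\cap\overline{\cA_{j_{k-1}}}\cap C)$ is at least $1-\tfrac{1}{d+1}=\tfrac{d}{d+1}$ by the inductive hypothesis applied to a conditioning set of size strictly less than $|S|$. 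Hence $P(B\mid C) \ge (d/(d+1))^m \ge (d/(d+1))^d \ge 1/e$, and combining gives
\[
  P(\cA_i\mid B\cap C) \;<\; ep \;\le\; \frac{1}{d+1},
\]
which closes the induction.

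Two small points to watch: first, every conditional probability above is well defined because the same inductive statement forces each partial intersection of complements to have probability bounded away from $0$; second, the constant $e$ in the hypothesis is pinned down by the elementary inequality $(1+1/d)^d \le e$ used in bounding the denominator, and no other step in the argument forces the choice of $e$. The main ``difficulty'' is therefore purely bookkeeping — applying the inductive hypothesis to the correct event with the correct conditioning set at each factor — rather than any genuinely hard probabilistic estimate.
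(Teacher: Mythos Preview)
Your argument is the standard Erd\H os--Lov\'asz induction and is correct; the only minor quibble is that the final line should read $ep<\frac{1}{d+1}$ (which follows directly from the hypothesis $ep(d+1)<1$) rather than $ep\le\frac{1}{d+1}$, but this is cosmetic. Note that the paper itself does not prove this statement at all: it is quoted as a black-box tool from \cite{EL} in the section on probabilistic preliminaries, so there is no ``paper's own proof'' to compare against. Your write-up simply supplies the classical justification that the authors take for granted.
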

Our second tool is the Kim-Vu inequality \cite{KV}. This is needed
to obtain exponential bounds for sums of not necessarily independent
random variables. Let $\Upsilon=(W,F)$ be a hypergraph of rank $2$,
meaning that each $f\in F$ satisfies $|f|\leq2$. Let $z_{v}$ for
$v\in W$ be independent indicator random variables. Set \[
Z=\sum_{f\in F}\prod_{v\in f}z_{v}\]
 where we allow $f=\emptyset$ in which case the empty product is
1. For $A\subseteq W,|A|\leq2$ let \[
Z_{A}=\sum_{\substack{f\in F\\
f\supseteq A}
}\prod_{v\in f\setminus A}z_{v}.\]
 Let $M_{A}=E(Z_{A})$ and $M_{j}=\max M_{A}$ over all $A$ of size
$j$. Set $\mu=M_{0}=E(Z)$ and put \[
M'=\max\{M_{1},M_{2}\}\quad\quad\hbox{and}\quad\quad M=\max\{\mu,M'\}.\]
 Then for any $\lambda>0$, \begin{equation}
\Pr(|Z-\mu|\geq95\lambda^{2}\sqrt{MM'})\leq20|W|e^{-\lambda}.\label{KimVu}\end{equation}

\section{Proof of Theorem \ref{main}}
In this section we prove the existence of the $r$-coloring guaranteed by Theorem \ref{main}.  In the next section we will prove the algorithmic part of the theorem.

Let $d\ge 2$ and $\cH$ be a 3-graph with vertex set $V$ and maximum degree $d$.  Let $r\ge 10^7 \sqrt d \log^2d$.
Notice that we may assume that $d\ge 10^7$: Otherwise $r>d$, and so $\mathcal H$ has a cover graph  $H$ with maximum degree $d$. By the Hajnal-Szemer\'edi Theorem $H$, and thus $\mathcal H$, has an equitable $r$ coloring.
To further simplify matters, first assume that $r$ divides $n=|V|$ and set $$s=\frac{n}{r}.$$ At the end of this section we will discuss the minor modification that is required in the general case.
Our goal is to color $\mathcal{H}$ with $r$ colors so that every
class has size $s$. This is accomplished in three steps.  Throughout the rest of the proof, set
$$t= \lceil \sqrt{d} \, \,\rceil.$$

{\bf Step 1.} First we
partition $V$ into $t$ sets $X_{1},\dots,X_{t}$ and define a graph $H$ so that  $H[X_{i}\cup X_j]$ is a cover of $\mathcal{H}[X_{i}\cup H_j]$ for each pair $i,j\in [t]$, and $H[X_i]$
  has maximum degree less than $$p=10^{5}\log^{2}d.$$ Actually, we will need the more technical statement of Proposition~\ref{degree}. If $|X_{i}|\geq ps$
and $s\mid|X_{i}|$ then we can use the Hajnal-Szemer\'edi Theorem
to partition $X_{i}$ into independent $s$-sets of $H_{i}$; since $H_{i}$ is a cover of $\mathcal H_{i}$, these are also independent $s$-sets of $\mathcal H_{i}$. So we are left with two
problems: either $X_{i}$ is small or $s\nmid|X_{i}|$ for some $i$.

{\bf Step 2.}
In the second step we move vertices from small $X_{i}$
to large $X_{j}$, preserving our ability to partition all but less than $s$ vertices of $X_{j}$ into independent $s$-sets.  This leaves us with no small parts $X_{i}$.

{\bf Step 3.} Finally
in the last step we shift a small number of vertices from one
class to the next so that all $X_{i}$ are divisible by $s$.

In the next three subsections, we carry out the details.

\subsection{Step 1.}
Set  $a=t$ and note that\[
r=10^7\sqrt d \log^2 d\ge  100tp. \]
Recall that $L=\bigcup_{v\in V} L_v$ where $L_v$ is the link graph of vertex $v$.

{\bf Definition.}
{\em An edge $xy\in L$ is \emph{strong }if
$$|\{z\in V: xyz\in \mathcal{H}\}|\ge a;$$
otherwise it is \emph{weak}.
A triad $vxy\in\mathcal{H}$ is strong
if it contains a \emph{strong} edge of $L$; otherwise it is \emph{weak}.}

Let $G\subseteq L$ be the subgraph of $L$ consisting of strong edges.
Later we will view $G$ as a digraph with edges oriented in both directions.
\begin{lem}
There exists a coloring $f:V\rightarrow[t]$ with classes $X_{i}=\{x:f(x)=i\}$
such that for all vertices $v$ and colors $i$, the following two properties hold:

$(A_{v,i})$ $\mathcal{H}$ has less than $10^{4}\log^{2}d$ weak
triads $vxy$ with $f(x)=i=f(y)$ and

$(D_{v,i})$ $G$ has less than $10\log d$ edges $vx$ with $f(x)=i$.\end{lem}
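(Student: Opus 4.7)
The plan is to assign each vertex of $V$ an independent uniformly random color in $[t]$ and then apply the Local Lemma (Theorem~\ref{ll}). For each pair $(v,i)\in V\times[t]$ I introduce two bad events: $B_{v,i}$ that $(A_{v,i})$ fails, and $C_{v,i}$ that $(D_{v,i})$ fails. Neither event depends on $f(v)$, so it suffices to bound their probabilities and their dependency degrees.

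For $C_{v,i}$, the key structural bound is that the number of edges of $G$ incident to $v$ is at most $2\sqrt{d}$: each strong link edge $vx$ satisfies $d(vx)\geq t$, while $\sum_x d(vx)\leq 2d$ (twice the number of triads through $v$). Because the indicators $z_x=\mathbf{1}[f(x)=i]$ are independent across distinct $x$, the count in $(D_{v,i})$ is binomial on at most $2\sqrt{d}$ trials, each with success probability $1/t$, so its mean is at most $2$. An elementary binomial tail estimate then gives $\Pr(C_{v,i})\leq (e/(5\log d))^{10\log d}$, which is super-polynomially small in $d$.

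For $B_{v,i}$ the indicators for distinct pairs are correlated, so I would invoke the Kim--Vu inequality~\eqref{KimVu} with $z_w=\mathbf{1}[f(w)=i]$ and with $F$ the set of pairs $\{x,y\}$ such that $vxy$ is a weak triad. The crucial structural fact is that in any weak triad $vxy$ all three link edges are weak, so $d(vx),d(vy),d(xy)<t$. This forces $|F|\leq d\leq t^2$, and for each fixed $w$ we have $|\{y:\{w,y\}\in F\}|<t$, because otherwise $d(vw)\geq t$ would contradict the weakness of every $vwy$. Consequently $\mu\leq 1$, $M_1\leq 1$, $M_2\leq 1$, and one may take $|W|\leq 2d$. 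Setting $\lambda=10\log d$ then yields $95\lambda^2\sqrt{MM'}<10^4\log^2 d$ and $20|W|e^{-\lambda}=O(d^{-9})$, so $\Pr(B_{v,i})=O(d^{-9})$.

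Finally, $B_{v,i}$ depends only on the colors of the at most $2d$ vertices appearing in weak triads at $v$; each such vertex lies in at most $2d$ such neighborhoods; and there are $r$ choices of color. Hence the dependency degree of every bad event is at most $O(d^2 r)=O(d^{5/2}\log^2 d)$, which is amply swallowed by $p=O(d^{-9})$, so Theorem~\ref{ll} applies and a coloring satisfying every $(A_{v,i})$ and $(D_{v,i})$ exists. The main obstacle is the Kim--Vu estimate for $B_{v,i}$: it is precisely the weakness condition on the triads that collapses the parameters $M_A$ to constants, which is what makes the tail strong enough to absorb the $O(d^{5/2}\log^2 d)$ dependency factor in the Local Lemma.
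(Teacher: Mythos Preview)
Your proof is correct and follows essentially the same approach as the paper's: a uniform random $t$-coloring, Kim--Vu for the weak-triad events $B_{v,i}$ (with the key observation that weakness forces the link-degree bound $d(vw)<t$ and hence $M,M'\le 1$), a binomial tail for $C_{v,i}$ via $\deg_G(v)\le 2d/t\le 2\sqrt d$, and then the Local Lemma. One tiny slip: the color index $i$ ranges over $[t]$, not $[r]$, so the dependency degree is $O(d^2 t)=O(d^{5/2})$ rather than $O(d^{5/2}\log^2 d)$---but your overcount is harmless for the Local Lemma application.
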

\begin{proof}
Let $f:V\rightarrow[t]$ be a random coloring obtained as follows:
For each vertex $v$, independently choose $f(v)\in[t]$ so that each
color in $[t]$ has probability $1/t$ of being chosen. We will apply
the Local Lemma to prove that with positive probability $f$ satisfies
$(A_{v,i})$ and $(D_{v,i})$ for all vertices $v$ and colors $i$.
For $v\in V$ and $i\in[t]$, let $\overline{A}_{v,i}$ be the event
that $(A_{v,i})$ fails and $\overline{D}_{v,i}$ be the event that
$(D_{v,i})$ fails.

\textbf{Bound on $P(\overline{A}_{v,i})$:} We use Kim-Vu concentration, where
 $\Upsilon$ is the set of edges $xy$ such that $vxy$ is a weak triad, and $z_{x}$ is the indicator random
variable for vertex $x$ receiving color $i$. Clearly $E(z_{x})=1/t$
for each vertex $x$. Note that the Kim-Vu setting is consistent with
our probability space, as we assign each vertex color $i$ with probability
$1/t$ independently of all other vertices.  Moreover, $Z=\sum_{f\in \Upsilon}\prod_{v\in f}z_{v}$ is the random variable that counts the number of edges $xy \in \Upsilon$ such that $f(x)=f(y)=i$.  Hence $\overline{A}_{v,i}$ is precisely the event that $Z\ge10^{4}\log^{2}d$. For adjacent edges $f,f'\in\Upsilon$, the events $\prod_{v\in f}z_{v}$ and $\prod_{v\in f'}z_{v}$ are not independent, and so we do need the Kim-Vu concentration.

Let $d_{v}$ be the number
of edges in $\Upsilon$.
 For any vertex
subset $A$ of size two, $Z_{A}=1$ by definition of the empty product.  Therefore
$$M_2=1.$$
If $\Upsilon$ has a vertex $x$ of degree at least $a$, then there are at least $a$ edges of $\cal H$ containing both $v$ and $x$. This implies that $vx$ is a strong edge and hence all triads of the form $vxy$ where $xy \in \Upsilon$ are strong.  This contradicts the definition of $\Upsilon$.  We conclude that
 $\Upsilon$ has maximum degree less than $a$ and consequently, $$M_{1}\le \frac{a}{t}=1.$$
Also \[
\mu=M_{0}=E(Z)\le\frac{d_{v}}{t^{2}}\le\frac{d}{t^{2}}\le1.\]
 So \[
M'=\max\{M_{1},M_{2}\}=1,\mbox{ }M=\max\{\mu,M'\}=1\mbox{ and }\sqrt{MM'}=1.\]
Set $\lambda=10\log d$. Since $\mu=1$, we have \[P(\overline{A}_{v,i})=
P(Z\ge10^{4}\log^{2}d)=P(Z\ge100\lambda^{2})\le P(|Z-\mu|\ge95\lambda^{2}).\]

Inequality (\ref{KimVu}) yields \[
P(|Z-\mu|\ge95\lambda^{2})=P(|Z-\mu|\ge95\lambda^{2}\sqrt{MM'}\,)\le20(2d)e^{-\lambda}\le\frac{40d}{d^{10}}<\frac{1}{d^{8}}.\]

\textbf{Bound on $P(\overline{D}_{v,i})$:} The crucial observation
here is that the graph $G$ has maximum degree at most $2d/a$. This
is true because every edge of $G$
is contained in at least $a$ edges of $\cH$. So if a vertex $v$
is incident to more than $2d/a$ edges of $G$, then $d_{\cH}(v)>(2d/a)(a/2)=d$,
contradiction. Consequently,
\begin{align}
P(\overline{D}_{v,i}) &\le P(\mbox{Bin}(2d/a,1/t)>10\log d) \notag \\
&<\binom{2d/a}{10\log d}\left(\frac{1}{t}\right)^{10\log d} \notag \\
&<\left(\frac{2ed}{10at\log d}\right)^{10\log d} \notag \\
&<\left(\frac{1}{e}\right)^{10\log d}<\frac{1}{d^{9}}. \notag
\end{align}
Each $E\in\{\overline{A}_{v,i},\overline{D}_{v,i}\}$ is independent
of any collection of $F\subseteq\{\overline{A}_{w,j},\overline{D}_{x,l}:w,x\in V,j,l\in[t]\}$
as long as no edge containing $w$ or $x$ shares a point with an
edge containing $v$. So we may apply the Local Lemma with dependency
degree at most $5d^{2}$. Since $e(5d^{2})(1/d^{9})<1$, the Local
Lemma implies that there is a vertex partition $X_{1}\cup\ldots\cup X_{t}$
of $V$ that satisfies none of the events $\overline{A}_{v,i},\overline{D}_{v,i}$.
\end{proof}
Fix $f:V\rightarrow[t]$ with color classes $X_{i}:=\{x:f(x)=i\}$
as in the lemma.
Viewing $G$ as a digraph,
let $H\supset G$ be the digraph formed from $G$ by adding the diedges
$(v,x)$ and $(v,y)$ for each weak triad $vxy\in\cH$ with $f(x)=f(y)$.  Occasionally we will view $H$ as a (simple) graph by replacing the diedges $(x,y)$ or $(y,x)$ by the (undirected) edge $xy$.
\bigskip

\begin{prop} \label{bicover}
$H[X_{i}\cup X_{j}]$ covers $\mathcal{H}[X_{i}\cup X_{j}]$.\end{prop}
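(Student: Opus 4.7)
The plan is to verify the covering property by case analysis on whether the triad $vxy$ in $\mathcal H[X_i\cup X_j]$ is strong or weak, using only the definitions of $G$, $H$, and the "weak triad" notion.

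\textbf{Strong case.} Suppose $vxy\in \mathcal H[X_i\cup X_j]$ is a strong triad. By definition, it contains a strong edge, i.e. an edge $e$ of $L$ lying in at least $a$ triads of $\mathcal H$. This edge belongs to $G$, and hence to $H$ (since $H\supseteq G$). Because both endpoints of $e$ are among $v,x,y\in X_i\cup X_j$, the edge $e$ lies in $H[X_i\cup X_j]$, which witnesses that $H[X_i\cup X_j]$ covers this triad.

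\textbf{Weak case.} Suppose $vxy\in\mathcal H[X_i\cup X_j]$ is weak. All three vertices lie in $X_i\cup X_j$, so by pigeonhole two of them receive the same color. Relabel so that the triad is written as $v'x'y'$ with $f(x')=f(y')\in\{i,j\}$. Then, by the construction of $H$, the diedges $(v',x')$ and $(v',y')$ were added to $H$. Regarded as an (undirected) edge, $v'x'$ is an edge of $H[X_i\cup X_j]$ which is contained in the triad $vxy$.

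In both cases $H[X_i\cup X_j]$ contains an edge of the given triad, proving the proposition. The only subtle point is the relabeling step in the weak case, where one must remember that the definition of "weak triad $vxy$ with $f(x)=f(y)$" is really a statement about the \emph{unordered} triad together with some choice of two equicolored vertices; pigeonhole on three vertices in two classes guarantees such a choice always exists. There is no serious obstacle here: the proposition is essentially an unpacking of definitions, and is the reason the auxiliary digraph $H$ was defined exactly as it was.
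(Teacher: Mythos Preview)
Your proof is correct and follows essentially the same approach as the paper's: a case split on strong versus weak triads, using the strong edge in $G\subseteq H$ in the first case and pigeonhole together with the definition of $H$ in the second. The only difference is cosmetic---you spell out the relabeling step more explicitly than the paper does.
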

\begin{proof}
Let $xyz$ be a triad in $\mathcal{H}[X_{i}\cup X_{j}]$. If $xyz$
is strong then it contains a strong edge from $G[X_i \cup X_j] \subset H[X_{i}\cup X_{j}]$.
Otherwise $xyz$ is weak, but has two vertices from the same class,
say $x$ and $y$. Then $(z,x),(z,y)\in E(H)$.
\end{proof}
\bigskip

\begin{prop}\label{degree}
All $v\in V$ and $i\in[t]$ satisfy

{\rm (a)} $|\vec E_{H}(v,X_{i})|<p-1$, 
and

{\rm (b)} if $v\in X_{i}$ then $|E_{H}(v,X_{i})| <p-1$, where we view $H$ as a  graph.
\end{prop}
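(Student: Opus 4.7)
The plan is that both parts should follow almost immediately from the two properties $(A_{v,i})$ and $(D_{v,i})$ guaranteed by the lemma, together with the explicit construction of $H$ from $G$ by adding diedges for weak triads. There is no real difficulty, only careful bookkeeping of how an edge incident to $v$ can arise.

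For (a), I would classify each diedge in $\vec E_H(v,X_i)$ according to whether it comes from $G$ or was added because of a weak triad. The strong contribution is at most $|E_G(v,X_i)|$, which by $(D_{v,i})$ is less than $10\log d$. Each weak triad $vxy\in\mathcal H$ with $f(x)=f(y)=i$ contributes at most two out-diedges from $v$ to $X_i$, namely $(v,x)$ and $(v,y)$, so by $(A_{v,i})$ the weak contribution is less than $2\cdot 10^4\log^2 d$. Hence
\[
|\vec E_H(v,X_i)| < 10\log d + 2\cdot 10^4\log^2 d,
\]
which is easily less than $p-1 = 10^5\log^2d - 1$ since $d\ge 10^7$ gives $\log^2 d\ge 256$.

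For (b), the only subtlety is that when $H$ is viewed as a graph, an edge $vx$ of $H[X_i]$ incident to $v\in X_i$ may arise from the diedge $(v,x)$ or from the diedge $(x,v)$. If $(v,x)$ was added from a weak triad $vxy$ with $f(x)=f(y)$, then $x\in X_i$ forces $f(y)=i$, hence $y\in X_i$. If $(x,v)$ was added from a weak triad $xvw$ with $f(v)=f(w)$, then $v\in X_i$ forces $w\in X_i$. In both subcases the weak triad lies entirely in $X_i$ (since $v\in X_i$ too), and the triad contains $v$ with both other vertices in $X_i$. Such triads are counted by $(A_{v,i})$ (there are fewer than $10^4\log^2 d$ of them), and each contributes at most two new vertices adjacent to $v$ in $H[X_i]$. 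Adding the strong contribution from $(D_{v,i})$ as before gives the same numerical bound
\[
|E_H(v,X_i)| < 10\log d + 2\cdot 10^4\log^2 d < p-1.
\]

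The ``main obstacle'' is really just the observation in (b) that the two roles $v$ can play in a weak triad contributing an undirected edge to $v$ in $H[X_i]$ both produce triads entirely inside $X_i$, so a single application of $(A_{v,i})$ suffices and no separate ``in-degree'' estimate is needed.
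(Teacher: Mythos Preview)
Your proof is correct and follows essentially the same approach as the paper: both parts are obtained by splitting $\vec E_H(v,X_i)$ into the strong contribution bounded by $(D_{v,i})$ and the weak contribution bounded by $(A_{v,i})$, and the key observation for (b) is that any weak triad producing an undirected edge at $v$ inside $X_i$ must lie entirely in $X_i$ and hence is already counted by $(A_{v,i})$. The only cosmetic difference is that the paper phrases (b) as a reduction to (a) --- showing every in-neighbor of $v$ in $X_i$ is already an out-neighbor --- whereas you rederive the numerical bound directly; the underlying observation is identical.
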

\begin{proof}
By property $(D_{v,i})$, we have $|\vec E_{G}(v,X_{i})|<10\log d$. The number of weak triads of $\cH$ that contribute to $\vec E_{H-G}(v,X_{i})$
 is at most
$10^{4}\log^{2}d$ by $(A_{v,i})$,
and each of these triads contributes two out-edges to $\vec E_{H-G}(v,X_{i})$.
Thus \[
|\vec E_{H}(v,X_{i})|\le10\log d+2\cdot10^{4}\log^{2}d<10^{5}\log^{2}d-1=p-1.\]
 Now suppose that $u,v\in X_{i}$, and $(u,v)\in \vec E_{H}(u,X_{i})$. If $(u,v)\in \vec E_{G}(u,X_{i})$
then also $(v,u)\in \vec E_{G}(v,X_{i})$, and so $u$ has already been counted as an out-neighbor of $v$. If
$(u,v)\in \vec E_{H-G}(u,X_{i})$ then there exists a weak triad $uvw\in\mathcal{H}$
with $f(v)=f(w)=i$. Since $f(u)=i$, $(v,u)\in \vec E_{H-G}(v,X_{i})$ and so again
$u$ has been counted as an out-neighbor of $v$.\end{proof}

By Proposition \ref{degree} part (b),
\bigskip

\centerline{the graph $H[X_i]$ has maximum degree less than $p$ for each $i \in [t]$. \quad $(*)$}
\bigskip

By the Hajnal-Szemer\'edi Theorem, for every $p'\geq p$, every $(p's)$-subset of vertices of
 $H[X_{i}]$ has an equitable $p'$-coloring with color classes of
size $s$. To finish the argument we would like to partition each $X_{i}$ into blocks
whose sizes are
 at least $ps$ and is divisible by $s$, but this may not be possible.  So we may need to make some adjustments to
the $X_{i}$'s. This is not too difficult if all the $X_{i}$'s have
size at least $12ps$, but first we must arrange that none of the
$X_{i}$ have size less than $12ps$. This is done in Step 2  by distributing all vertices in small $X_i$ to big $X_j$. Doing so may corrupt the nice
properties of the remaining big $X_{i}$, so we must preserve a reasonably
large uncorrupted segment of each big $X_{i}$.

\subsection{Step 2.} First we get organized. Let $J=\{{i}:|X_{i}|\ge12ps\}$
and $S=V\setminus\bigcup_{i\in J}X_i$. Then $|S|<12pst$. We may assume that $J=[t_{0}]$. For each $i\in [t_{0}]$, partition
$X_{i}$ as $X_{i}=Y_{i}\cup Z_{i}$ so that
\begin{equation} ps\mid|Y_{i}|\textrm{ and }12ps\le|Z_{i}|<13ps.\label{YZ}\end{equation}

Set $Z=\bigcup_{i\in [t_0]}Z_{i}$.
Using the Hajnal-Szemer\'edi Theorem and $(*)$, properly color each $H[Y_{i}]$ (and thus each $\mathcal H[Y_i]$) to
obtain color classes $Y_{i,1},\ldots,Y_{i,j}$, each of size $s$.
Let $t_{1}$ be the number of these classes. Then
$$n=|V|=t_1s+|Z|+|S|\le t_1s+13pst+12pst=t_1s+25pst.$$
Recalling that $n=rs$ and dividing by $s$ yields
\begin{equation}
t_{1}\ge r-25pt\ge 100pt-25pt=75pt.\label{t1}\end{equation}
This also implies that $r-t_1 \le 25pt \le r/4$ and hence $t_1\ge 3r/4$.
 Let us rename the $Y_{i,j}$'s, to obtain a vertex partition $W_{1}\cup\ldots\cup W_{t_{1}}$
of $V-S-Z$.

For $v\in S$, define
$$I(v)=\{i\in[t_{1}]:\vec E_H(v,W_{i})=\emptyset\}.$$
As the $W_{i}$'s were formed by refining the partition given
by the $X_{j}$'s, and $|\vec E_H(v,X_j)|<p$  by Proposition~\ref{degree},  we obtain from \eqref{t1}
\begin{equation} |I(v)|\ge t_1-pt\ge\frac{r}{2}.\label{pr}\end{equation}

Now for each $v\in S$, pick one of the elements $i\in{I}(v)$,
where each $i$ has probability $1/|{I}(v)| $ of being picked,
and assign $v$ the color $i$, calling this assignment $\chi(v)=i$.
In this way each $W_{i}$ is enlarged to a set $W_{i}^{+}$, where
$W_{i}^{+}$ contains all those $v\in S$ for which $\chi(v)=i$.
The sets $W_{i}^{+}$ now partition $V\setminus Z$.
\begin{lem} \label{choice}
There is a choice of $\chi$ so that each $W_{i}^{+}$ is an independent
set of $\cH$. \end{lem}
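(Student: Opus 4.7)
The plan is to use the Lov\'asz Local Lemma on the random assignment $\chi$. A triad $T\in\cH$ could land entirely in some $W_i^+$ only in one of three configurations: (C1) two vertices in some $W_i$ and one vertex in $S$; (C2) exactly one vertex in some $W_{i_0}$ and two in $S$; or (C3) all three vertices in $S$. A triad with any vertex in $Z$ is irrelevant since $Z$-vertices are not placed in any $W_i^+$ in Step~2; a triad with three vertices in a single $W_j$ cannot exist because $W_j$ is independent in $H$ and $H[X_{j_0}]$ covers $\cH[X_{j_0}]$ by Proposition~\ref{bicover}; and a triad with two vertices in different $W_j$'s is trivially safe.

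The crux is that type (C1) triads are automatically ruled out by the definition of $I(v)$. Let $T=\{u,w,v\}$ with $u,w\in W_i\subseteq X_{i_0}$ and $v\in S\cap X_{i_1}$, where $i_1\ne i_0$. By Proposition~\ref{bicover}, $T$ contains an edge of $H[X_{i_0}\cup X_{i_1}]$, and this edge cannot be $uw$ since $W_i$ is $H$-independent. Hence it is $uv$ or $wv$. If the edge lies in $G$, then since $G$ is viewed as bidirected, there is a diedge from $v$ into $W_i$ in $H$. Otherwise it comes from a weak triad, necessarily of the form $vxy$ with $f(x)=f(y)=i_0$ and $\{x,y\}=\{u,w\}$, so by the construction of $H$ the diedges $(v,u),(v,w)$ lie in $H$. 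Either way $\vec E_H(v,W_i)\ne\emptyset$, so $i\notin I(v)$ and the choice $\chi(v)=i$ cannot occur.

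For each triad $T$ of type (C2) or (C3), let $A_T$ be the event that $T\subseteq W_i^+$ for some $i$. In (C2), with the unique $W$-vertex lying in $W_{i_0}$, the event forces $\chi(v_1)=\chi(v_2)=i_0$, so by \eqref{pr}
\[ P(A_T)\le \frac{1}{|I(v_1)||I(v_2)|}\le \frac{4}{r^2}. \]
In (C3), summing over the shared color $i$ yields
\[ P(A_T)\le \sum_{i\in[t_1]}\frac{1}{|I(v_1)||I(v_2)||I(v_3)|}\le \frac{8t_1}{r^3}\le \frac{8}{r^2}, \]
using $t_1\le r$. Since $A_T$ depends only on $\{\chi(v):v\in T\cap S\}$, the events $A_T,A_{T'}$ are mutually independent whenever $T\cap T'\cap S=\emptyset$. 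Each vertex of $\cH$ lies in at most $d$ triads and $|T\cap S|\le 3$, so each $A_T$ is dependent on at most $3d$ other events. The Local Lemma condition $e\cdot(8/r^2)\cdot(3d+1)<1$ follows easily from $r\ge 10^7\sqrt d\log^2 d$, which supplies a choice of $\chi$ for which every $W_i^+$ is $\cH$-independent.

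I expect the main obstacle to be the (C1) analysis: one must trace carefully through the construction of $H$ from $G$ (which is treated as bidirected) and from the weak triads (which produce out-diedges from the ``odd'' vertex) in order to verify that an $H$-edge between $v$ and a vertex of $W_i$ really forces an \emph{out}-diedge from $v$, not merely an in-diedge to $v$. The probability bounds and the Local Lemma application itself are then routine.
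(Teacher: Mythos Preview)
Your argument is correct and follows the paper's approach exactly: the paper also rules out configuration (C1) by the definition of $I(v)$ (tersely: ``By the choice of $\chi(v)\in I(v)$, $\{x,y\}\nsubseteq W_i$''), then bounds $P(B_e)$ by $4/r^2$ and $8/r^2$ in cases (C2) and (C3) respectively, and applies the Local Lemma with dependency degree at most $3d-1$.

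One point in your (C1) analysis should be tightened, since you flagged it yourself. From Proposition~\ref{bicover} you extract an undirected $H$-edge $uv$ (say) and then case on whether it lies in $G$ or in $H\setminus G$; in the latter case you assert that the witnessing weak triad is necessarily $T=vuw$. That inference is not valid as stated: the undirected edge $uv$ could be present in $H\setminus G$ solely via the diedge $(u,v)$, arising from some unrelated weak triad $uvz$ with $f(v)=f(z)=i_1$, which gives only an in-diedge to $v$. The clean fix is to case directly on $T$ rather than on the edge: if $T$ is strong it contains a $G$-edge, which cannot be $uw$ (as $W_i$ is $H$-independent) and is bidirected, yielding $(v,u)$ or $(v,w)$; if $T$ is weak then, since $f(u)=f(w)=i_0$, the construction of $H$ inserts $(v,u)$ and $(v,w)$ explicitly. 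Either way $\vec E_H(v,W_i)\ne\emptyset$, so $i\notin I(v)$, exactly as you concluded.
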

\begin{proof}
For each triad $e=vxy\in\cH$, with $v\in S$, let ${B}_{e}$
be the event that $e$ becomes monochromatic after these random choices
have been made, i.e., $e\subset W_{i}^{+}$ for some $i\in[t_{1}]$.
By the choice of $\chi(v)\in I(v)$, $\{x,y\}\nsubseteq W_{i}$. If $v,x\in S$ and $y\in W_{i}$, then \[
P(B_{e})=P(\chi(v)=i=\chi(x))=\frac{1}{|{I}(v)|}\frac{1}{|{I}(x)|}\le\frac{4}{r^2}<\frac{1}{100d};\]
 otherwise $v,x,y\in S$, and so again \[
P(B_{e})\le\sum_{i=1}^{t_{1}}P(\chi(v)=\chi(x)=\chi(y)=i)=\frac{t_{1}}{|{I}(v)||{I}(x)||{I}(y)|}<\frac{8r}{r^{3}}=\frac{8}{r{}^{2}}<\frac{1}{100d}.\]
 In both cases $P(B_{e})\le p=1/100d$. The event $B_{e}$
is mutually independent of all other events $B_{f}$ for which $e\cap f=\emptyset$,
so the dependency degree in the Local Lemma is at most $3d-1$. Since
$ep(3d)<1$, the Local Lemma implies that there is a partition $W_{1}^{+}\cup\ldots\cup W_{s}^{+}$
of $V-Z$ that is a proper coloring of $\cH$.
\end{proof}
For each $j\in [t_1]$, $|W_{j}^{+}|\ge|W_{j}|=s$. Partition each
$W_{j}^{+}$ into  $s$-sets and one set $R_{j}$ (possibly
empty) of size less than $s$, so that $R_{j}\subset W_{j}=Y_{i,h}\subseteq Y_i\subseteq X_i$. This
is possible because $|W_{j}|=s$.  Note also that these $s$-sets have been shown to be independent sets of $\cal H$ in Lemma \ref{choice}.

 For each $i\in[t_0]$, set
$$U_{i}=Z_{i}\cup \bigcup_{R_j\subseteq Y_i}R_{j}\quad \text{ and } \quad U=\bigcup_{i\in[t_{0}]}U_{i}.$$
We have partitioned ${\cal H}[V\setminus U]$ into independent $s$-sets. Moreover, we have a partition of $U$ into large subsets $U_{i}\subseteq X_{i}$, each with size at
least $12ps$, as $U_i \supset Z_i$. This completes
Step 2.

\subsection{Step 3.} We have already colored all of ${\cal H}[V\setminus U]$
using classes of size $s$. The following
lemma completes Step 3 and the proof of the theorem.
\begin{lem} \label{9}
There exists a proper coloring of $U$ so that every class has
size $s$. \end{lem}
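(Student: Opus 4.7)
The plan is to shift fewer than $s$ vertices from each $U_i$ to $U_{i+1}$ so that every part's size becomes divisible by $s$, and then apply the Hajnal-Szemer\'edi theorem within each modified part, using the cover graph from Proposition~\ref{bicover}.

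I would process $i = 1, 2, \ldots, t_0-1$ in turn, letting $U_i^*$ denote the current state of the $i$-th class (after any vertices received from the previous step). Choose $r_i'$ with $0 \le r_i' < s$ and $r_i' \equiv |U_i^*| \pmod{s}$, pick a set $T_i \subseteq U_i^*$ of size $r_i'$, and transfer $T_i$ to $U_{i+1}$. The resulting modified class $U_i'' := U_i^* \setminus T_i$ has size divisible by $s$; the final class $U_{t_0}''$ is automatically divisible by $s$ because $|U|$ is (since $|V|=rs$ and the sets already produced in Steps~1--2 contribute a multiple of $s$). A crude accounting gives $|U_i''| \ge 12ps - (s-1) \ge 11ps$, and $U_i'' \subseteq X_{i-1} \cup X_i$ (with the convention $X_0 := \emptyset$).

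By Proposition~\ref{bicover}, $H[U_i'']$ is a cover of $\mathcal{H}[U_i'']$. If $H[U_i'']$ has maximum degree less than $m_i := |U_i''|/s \ge 11p$, then the Hajnal-Szemer\'edi theorem produces an equitable $m_i$-coloring of $H[U_i'']$ with classes of size exactly $s$; by the cover property these classes are independent in $\mathcal{H}[U_i'']$, and collecting them over all $i$ finishes the partition of $U$.

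The hard part is verifying the degree bound. Within a single $f$-class, Proposition~\ref{degree}(b) gives degree less than $p$, so only the \emph{undirected} cross-class degrees need to be controlled, and Proposition~\ref{degree} bounds these only in their directed form. I would overcome this by choosing $T_i$ carefully. First, the aggregate bound $|\vec E_H(X_{i-1}, X_i)| + |\vec E_H(X_i, X_{i-1})| \le p(|X_{i-1}| + |X_i|)$, obtained by summing Proposition~\ref{degree}(a), together with a Markov argument, shows that only a tiny fraction of $v \in X_{i-1}$ have undirected $H$-degree to $X_i$ exceeding $O(p)$; the set $T_{i-1}$ is drawn from the complementary \emph{light} vertices, of which there are far more than $s$. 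Second, a random selection among the light vertices, combined with the Local Lemma (or a greedy balancing argument), ensures that no single $u \in X_i$ gains more than $O(p)$ new $H$-neighbors from $T_{i-1}$. With both bounds in place, every vertex of $U_i''$ has degree at most $O(p) < 11p \le m_i$ in $H[U_i'']$, so Hajnal-Szemer\'edi applies and the lemma follows.
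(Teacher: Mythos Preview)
Your overall plan---shift fewer than $s$ vertices $T_{i-1}$ from $U_{i-1}$ into $U_i$, then apply Hajnal--Szemer\'edi to all of $U_i'' = T_{i-1}\cup(U_i\setminus T_i)$ at once---requires that \emph{every} vertex of $U_i\setminus T_i$ have only $O(p)$ neighbours in $T_{i-1}$. This is where the argument breaks. Proposition~\ref{degree} bounds only \emph{out}-degrees between $f$-classes; the \emph{in}-degree of a single vertex $u\in X_i$ from $X_{i-1}$ can be as large as $d$. Concretely, take $u\in X_i$ and $d$ weak triads $u\,y_k\,w_k$ with $y_k\in X_i$ and pairwise distinct $w_k\in X_{i-1}$, each $w_k$ lying in no other triad. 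All conditions $(A_{v,j})$ and $(D_{v,j})$ are satisfied, every $w_k$ is ``light'' (its undirected $H$-degree to $X_i$ is $2$), yet $u$ has $d$ in-neighbours among the light vertices. If $T_{i-1}$ has size $\Theta(s)$ and is drawn from a pool of light vertices of size $\Theta(ps)$, then the expected number of neighbours of $u$ in $T_{i-1}$ is $\Theta(d/p)=\Theta(d/\log^{2}d)$, which for large $d$ vastly exceeds any $O(p)$ bound. No Local Lemma or greedy balancing can repair this, because the expectation itself is already far too large; and you cannot push such $u$ into $T_i$ either, since there may be more than $s$ of them.

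The paper sidesteps this by \emph{not} asking all of $U_i$ to interact well with the transferred set. Instead it designates a small receiving buffer $P_i\subseteq U_i$ of size about $4ps$, chosen (via a Markov argument on a bounded-size pool $P'\subseteq U_i$) to have in-degree $<2p$ from the transferred set $Q_{i-1}$, and symmetrically chooses $Q_{i-1}$ to have low in-degree from $P_i$. Then $Q_{i-1}\cup P_i$ is a set of size exactly $4ps$ with $\Delta(H[Q_{i-1}\cup P_i])<4p$, so Hajnal--Szemer\'edi applies to it; the remainder $U_i\setminus(P_i\cup Q_i)$ lies entirely in $X_i$, where Proposition~\ref{degree}(b) already gives degree $<p$, and its size is arranged to be divisible by $s$. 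The essential idea you are missing is to isolate the cross-class mixing into a small controlled block rather than spreading it across all of $U_i$.
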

\begin{proof}
If $s\mid|U_{i}|$ for all $i\in [t_0]$
then, since each $|U_{i}|\ge |Z_i| \ge 12ps$, by $(*)$ we can use the Hajnal-Szemer\'edi Theorem to color $H[U_{i}]$, and thus $\mathcal H[U_{i}]$, so that
every class has size $s$. Otherwise, for all $i\in [t_0-1]$ we plan to shift small subsets
$Q_{i}\subseteq U_{i}$ from $U_{i}$ to $U_{i+1}$ so that

(i) $|Q_i|<s$,  and

(ii) $s\mid|Q_{i-1}\cup U_{i}\setminus Q_{i}|.$

The choice of $Q_{i}$ must be made with some care; since $Q_{i}\cup U_{i+1}$ contains vertices from $X_{i}$ and $X_{i+1}$, the degree bound of Proposition \ref{degree}(b) does not hold for this set. However the out-degree bound (a) does hold for both $X_{i}$ and $X_{i+1}$. So we will be able to choose $Q_{i}\subset U_i\setminus P_i$ and $P_{i+1}\subseteq U_{i+1}$
so that

(iii) $|Q_i\cup P_{i+1}|=4ps$ and

(iv) for all $v\in Q_{i}\cup P_{i+1}$ we have $|E_H(v,Q_{i}\cup P_{i+1})|<4p-1$ (i.e. $\Delta(H[Q_i \cup P_{i+1}]) <4p-1$).

We do this recursively. Initialize by setting $Q_{0}=\emptyset=P_{1}.$
Now suppose we have constructed $Q_{j}\subseteq U_{j}$ and $P_{j+1}\subseteq U_{j+1}$
for all $j<i$ so that (i--iv) hold. Set
$$\rho=|Q_{i-1}\cup U_{i}|\mod s.$$ Let
$P$ be an $(8ps)$-subset of $U_{i}\setminus P_{i}$ and $P'$ be
a $(8ps)$-subset of $U_{i+1}$. By Proposition~\ref{degree}, each vertex $v\in P$ has at most
$p$ neighbors in $P$ and at most $p$ out-neighbors in $P'$. Since in $H$
every vertex of $P'$ has at most $p$ out-neighbors in $P$, $|\vec E_H(P',P)|\leq8p^{2}s$.  Since $\rho \le s \le |P|/2$,
 we can choose a $\rho$-subset $Q_{i}\subseteq P$ so that every
vertex $v\in Q_{i}$ has in-degree from $P'$ satisfying $|\vec E(P',v)|<2p$; so (i) and (ii) are satisfied.
Similarly, we can choose $P_{i+1}\subseteq P'$ such that $|P_{i+1}|=4sp-\rho$
and every vertex $v\in P_{i+1}$ has in-degree from $Q_{i}$ satisfying
$|\vec E(Q_{i},v)|<2p$; so (iii) is satisfied. It follows that the maximum degree of $H[Q_{i}\cup P_{i+1}]$
is less than $4p-1$; so (iv) is satisfied.

By (i,iii,iv) and the Hajnal-Szemer\'edi Theorem, we can color $H[Q_{i}\cup P_{i+1}]$ so that every
class has size $s$. Since $Q_{i}\cup P_{i+1}\subseteq X_{i}\cup X_{i+1}$,
this a proper coloring of $\mathcal{H}[Q_{i}\cup P_{i+1}]$ by Proposition~\ref{bicover}. Similarly, by (ii), we can color each $\mathcal H[Q_{i-1}\cup U_{i}\setminus Q_{i}]$ so that each class has size $s$.

The procedure terminates when we have constructed
$P_{t_0} \subset U_{t_0}$.  By the above, we may equitably color
${\cal H}[Q_{t_0-1} \cup P_{t_0}]$ and so it remains to equitably color ${\cal H}[U_{t_0}\setminus P_{t_0}]$.  Since $s$ divides $n$ and the remaining vertices have been partitioned into $s$-sets, we conclude that $s$ also divides $|U_{t_0}\setminus P_{t_0}|$. Also, $|U_{t_0}\setminus P_{t_0}| \ge 12ps-4ps=8ps$ and by Proposition~\ref{degree} (b), the graph $H[U_{t_0}\setminus P_{t_0}]$ has maximum degree less than $p$. Therefore, we can once again use the Hajnal-Szemer\'edi Theorem to equitably color $H[U_{t_0}\setminus P_{t_0}]$ into $s$-sets.
 \end{proof}

Finally, we consider the case that $n=qr+b,~0< b<r$, and set $s=q+1$. So we need to partition $V$ into $b$ independent $s$-sets and $r-b$ independent $(s-1)$-sets. There is no change in Step 1. In Steps 2 and 3 we begin constructing independent $s$-sets, but after we have constructed $b$ of them, we build blocks with parts divisible by $s-1$. We apply the Hajnal-Szemer\'edi Theorem in exactly the same way, and it even does not matter if the switch comes in the middle of a block.

\section{A randomized algorithm}

In this short section we prove that the $r$-coloring of the previous section can be found via a randomized algorithm whose expected running time is polynomial in $n$.

The $r$-coloring is obtained in the following sequence of steps:

1) use the Local Lemma to produce the partition $X_i \cup \ldots \cup X_t$

2) apply the Hajnal-Szemer\'edi Theorem  to equitably color a large subset $Y_i$ of vertices of the graph $H[X_i]$

3) apply the Local Lemma to insert the vertices of $S=V\setminus \bigcup_{i \in J} X_i$ into the sets $W_i$, and

4) deterministically color $U$ as in Lemma \ref{9} repeatedly using the Hajnal-Szemer\'edi Theorem.

By the results of  Mydlarz-Szemer\'edi and Kierstead-Kostochka \cite{KK-HSz,MSz,KKMS}, we have deterministic polynomial time procedures for steps 2) and 4) above.  Consequently, it suffices to provide randomized algorithms for Steps 1) and 3), which essentially boils down to derandomizing the Local Lemma in these two instances. This is obtained by applying the recent algorithmic version of the Local Lemma due to Moser and Tardos (Theorem 1.2 in \cite{MT}).

\section{Concluding remarks}

$\bullet$ As mentioned in the introduction, it remains an open question to modify the above algorithm to make it deterministic.
This boils down to obtaining deterministic versions of the Local Lemma applications in the proof.
Moser and Tardos (\cite{MT}, Theorem 1.4) give a deterministic version of their algorithm with two additional technical assumptions. The first is that one should be able to efficiently evaluate the conditional probabilities of bad events given  the values of the random variables on a subset of vertices.  The second is that the maximum degree in the dependency graph for the Local Lemma is bounded by a constant.
Subsequently,  Chandrasekaran, Goyal, and Haeupler~\cite{CGH}
have removed the second assumption above, so it suffices to efficiently compute conditional probabilities given partial information on the values of random variables.

In the case of 3) above, we are able to do this, as it suffices to determine the size of $I(v)$ which amounts to looking at the diedges $(v, w)$ in $\vec H$.
  In the case of 1), we are also able to do this for the events $D_{v,i}$ as this amounts to just checking if vertex $x$ such that $vx \in G$, has color $i$.  However, we are not able to compute this conditional probability for the events $A_{v,i}$ efficiently, due to lack of independence. This is precisely where we needed the Kim-Vu inequalities, and it remains the only bottleneck that prevents us from obtaining a deterministic polynomial time algorithm.

$\bullet$ Surprisingly, our results do not extend to $k$-graphs when $k>3$. Here the number of colors we would expect to use is
$d^{1/(k-1)+o(1)}$. The technical reason why our  proof doesn't seem to work is the following: Assume that $k=4$. In the first application of the Local Lemma, we would need at least $t=d^{1/3+o(1)}$ colors in order to guarantee that the $X_i$ were close to being independent sets.
On the other hand, in the second application of the Local Lemma
(Lemma \ref{choice}) one would have to consider the case that we have edge $e=vwxy \in \cH$ where $v, w \in S$ and $x,y \in W_i$.
Then
$$P(B_e)=P(\chi(v)=i=\chi(w))=\frac{1}{|I(v)||I(w)|}=\frac{1}{t^{2+o(1)}}
=\frac{1}{d^{2/3+o(1)}}\gg \frac{1}{d}.$$  Consequently, the probability is not small enough to apply the Local Lemma.  Nevertheless, we conjecture the following:

\begin{conjecture} \label{k}
For each $k \ge 3$ there exists $c_k>0$ such that for every $d$ and $r \ge c_kd^{1/(k-1)}$ the following holds: Every $k$-graph
with maximum degree $d$ and $n$ vertices has an equitable $r$-coloring.  Such a coloring can be found in deterministic polynomial time in $n$.
\end{conjecture}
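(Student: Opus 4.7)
The plan is to adapt the three-step strategy used for $k=3$ to general $k \ge 3$: (a) partition $V$ into $t = \Theta(d^{1/(k-1)})$ pieces whose pairwise unions admit cover graphs of small maximum degree; (b) absorb a small leftover set $S$ into the large pieces; and (c) shift a few vertices between consecutive pieces to achieve divisibility. As before, the Hajnal--Szemer\'edi theorem is applied within each piece.

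For Step 1, the natural generalization calls for a layered notion of strength. For each $1 \le j \le k-1$, call a $j$-set $A$ \emph{$j$-strong} if
\[
|\{B \subseteq V : A\cup B\in\cH,\ |B|=k-j\}|\ge a_j
\]
for a threshold $a_j$ tuned to the level. An edge is classified by the largest $j$ for which it contains a $j$-strong $j$-subset. For each level, one imposes a Local Lemma bad event forbidding too many weak-at-level-$j$ monochromatic $k$-edges at a given vertex $v$ inside a color class $i$. The strong substructures have bounded co-degree and can be controlled deterministically, while the weak ones require Kim--Vu-type concentration iterated at each level. This should produce a partition in which the union of any two classes $X_i\cup X_j$ is covered by an auxiliary graph $H$ of maximum degree polylogarithmic in $d$, generalizing Proposition~\ref{degree}.

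Step 2 is the principal obstacle, as the authors explicitly discuss. For $k \ge 4$, when two vertices $v_1,v_2 \in S$ lie in a common $k$-edge with the other $k-2$ vertices already in some class $W_i$, the naive reinsertion has bad event probability $\Omega(1/r^2)=\Omega(1/d^{2/(k-1)})$, which is too large for the Local Lemma. My plan to circumvent this is to strengthen the conditions enforced in Step 1 by demanding that, for each $v$ and each tuple of classes $(i_1,\ldots,i_{k-2})$, there are few $k$-edges at $v$ with $k-2$ specified coordinates in the indicated classes; this pushes the danger deeper and constrains where $S$ can sit. The reinsertion of $S$ is then performed in rounds: in each round, greedily absorb those $v\in S$ for which the number of surviving bad edges is already small, reserving a truly random choice only for the residual vertices, whose bad events now have much smaller probability because the dangerous co-occurrences have been eliminated.

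Step 3 (divisibility) should carry over with only minor changes, using that the appropriate out-degree bounds hold across classes as well as within; one repeats the choice of $Q_i, P_{i+1}$ via averaging, now balancing $k-1$ neighborhoods rather than $2$. The randomized algorithm then follows from the Moser--Tardos framework applied in both Local Lemma invocations. A deterministic algorithm requires the conditional probability computations of \cite{CGH} to succeed also for the Kim--Vu governed events; this is the same barrier that already prevents derandomization in the $k=3$ proof, so an additional ingredient (perhaps entropy compression tailored to Kim--Vu bad events, or a direct combinatorial construction of the initial partition) would be needed. I expect Step 2 to be the main difficulty: a genuinely new idea, likely iterative absorption or a substantially different probabilistic architecture, will be required to break the $1/d^{2/(k-1)}$ barrier.
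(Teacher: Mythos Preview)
The statement in question is labeled a \emph{conjecture} in the paper and is explicitly left open; there is no proof in the paper to compare your proposal against. Your write-up is not a proof either: it is a research plan in which you yourself identify the principal obstruction (the $\Theta(1/d^{2/(k-1)})$ bad-event probability in Step~2) and concede that ``a genuinely new idea \ldots\ will be required.'' This matches exactly the obstacle the authors isolate in their concluding remarks.

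Your suggested workarounds for Step~2 do not, as stated, overcome the barrier. The problematic bad event --- two vertices $v_1,v_2\in S$ both landing in a class $W_i$ that already contains the remaining $k-2$ vertices of an edge --- has probability $\Theta(1/r^2)$ \emph{regardless} of how the initial partition was produced, because it depends only on the random choices $\chi(v_1),\chi(v_2)$. Strengthening Step~1 to control the number of such dangerous edges at each vertex does not lower the per-edge probability, and the dependency degree in the Local Lemma is still $\Theta(d)$; so the product $ep(d+1)$ stays above $1$. A round-based absorption scheme would need to shrink one of these two quantities, and you give no mechanism for doing so. The deterministic-algorithm claim inherits in addition the Kim--Vu derandomization obstacle already present for $k=3$, which you also acknowledge.

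In short: there is nothing in the paper to compare against, your proposal is not a proof, and the heuristic fixes you sketch do not close the gap you correctly identify.
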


$\bullet$ With the recent activity~\cite{KK-HSz, KK-O, KKYbc, KKMS} on results about equitable colorings, it seems appropriate to study the question of obtaining equitable colorings in other contexts where coloring problems were explored. One active area of research is to obtain good upper bounds for the chromatic number of graphs that have local constraints.  In particular, a deep theorem of Johansson~\cite{J} that culminated many years of research is that every triangle-free graph with maximum degree $d$ has chromatic number at most $O(d/\log d)$.  We conjecture the following:

\begin{conjecture} \label{j}
For every $d>1$ there is a constant $c$ such that the every triangle-free graph with maximum degree $d$ has an equitable $r$-coloring, whenever $r>cd/\log d$.
\end{conjecture}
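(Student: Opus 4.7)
The plan is to adapt the three-step strategy of Theorem~\ref{main}: partition the vertex set into a moderate number of pieces via the Local Lemma, equitably color each piece, then adjust to handle divisibility. Concretely, I would first randomly assign each vertex a color in $[t]$ with $t$ of order $d^{o(1)}$ (for instance $t=\log^{3}d$), and use the Local Lemma together with Kim-Vu concentration to find a partition $V=X_{1}\cup\cdots\cup X_{t}$ such that each induced subgraph $G[X_{i}]$ has maximum degree at most $Cd/t$ and each $|X_{i}|$ is within a $(1\pm o(1))$-factor of $n/t$. Since $G$ is triangle-free, so is each $G[X_{i}]$, and Johansson's theorem yields $\chi(G[X_{i}])=O((d/t)/\log(d/t))=O(d/(t\log d))$; summing over $i$ gives the target order $r=O(d/\log d)$ of colors overall.

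To produce an \emph{equitable} coloring, one would then need an equitable analog of Johansson's theorem within each $X_{i}$: every triangle-free graph with maximum degree $d'$ has an equitable coloring with $O(d'/\log d')$ colors. Granting such a statement, the merging Step~2 and shifting Step~3 of Theorem~\ref{main} (moving small vertex sets between blocks to achieve divisibility by $s=n/r$, culminating in a proof analogous to Lemma~\ref{9}) carry over with only cosmetic changes: triangle-freeness is preserved under vertex deletion, so the auxiliary maximum-degree bounds one needs on the shifted blocks follow from a straightforward analog of Proposition~\ref{degree}, and the Local Lemma step for reinserting the leftover set $S$ goes through as before.

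The main obstacle is proving the equitable version of Johansson's theorem. Johansson's original argument proceeds by the semi-random (nibble) method, iteratively assigning tentative colors and retaining them only when no conflict arises; adapting this iteration to simultaneously maintain balanced color class sizes is a substantial technical challenge, because each nibble round can bias the sizes of surviving color classes by more than the $\pm 1$ discrepancy permitted in an equitable coloring, and naive normalization steps would re-introduce monochromatic edges. An alternative route --- first apply Johansson (or Molloy's sharper bound) to obtain a proper coloring with $O(d/\log d)$ colors, and then equalize class sizes via Kempe-chain swaps in the spirit of the Kierstead-Kostochka proofs of Hajnal-Szemer\'edi --- seems equally difficult, since the large gap between the chromatic number $O(d/\log d)$ and the maximum degree $d$ makes Kempe chains hard to keep short, and swapping along them can collide with vertices in many other classes.
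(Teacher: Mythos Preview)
The statement you are attempting to prove is Conjecture~\ref{j}; the paper offers no proof of it, and presents it explicitly as an open problem. So there is no ``paper's proof'' to compare against.

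More importantly, your proposal is circular. After partitioning $V$ into $t$ parts $X_{1},\dots,X_{t}$ with $\Delta(G[X_{i}])\le Cd/t$, you say you need ``an equitable analog of Johansson's theorem within each $X_{i}$: every triangle-free graph with maximum degree $d'$ has an equitable coloring with $O(d'/\log d')$ colors.'' But that statement, with $d'=Cd/t$, \emph{is} Conjecture~\ref{j}. You have reduced the conjecture to itself at a slightly smaller degree parameter; no progress has been made. You correctly identify this as ``the main obstacle,'' but it is not an obstacle within an otherwise valid argument --- it is the entire problem.

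Contrast this with the proof of Theorem~\ref{main}. There the point of the random partition is that inside each $X_{i}$ (and each $X_{i}\cup X_{j}$) one can build a \emph{cover graph} $H$ of bounded degree, and then invoke the Hajnal--Szemer\'edi theorem for graphs, which is already known and already equitable. The partition genuinely reduces the hypergraph problem to a known graph result. In your setting there is no analogous reduction: the pieces $G[X_{i}]$ are still triangle-free graphs of large degree, and the only tool that colors them with $O(d'/\log d')$ colors is Johansson's theorem, which is not equitable. The three-step scaffolding (partition, color pieces, shift for divisibility) is useless unless the middle step lands on a theorem that is already in hand; here it does not.
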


Very recently, Frieze and Mubayi have proved a hypergraph analogue of Johansson's theorem mentioned above.  In particular, they prove in \cite{FM1} and \cite{FM2} that every linear (meaning that every two edges share at most one vertex) $k$-graph with maximum degree $d$ has chromatic number at most $O((d/\log d)^{1/(k-1)})$ and this is sharp in order of magnitude. While the proof of this theorem is much more complicated than Johansson's result, the basic method is similar, so an extension of Conjecture~\ref{j} seems plausible.

\begin{conjecture} \label{fm}
For every $d>1$ there is a constant $c$ such that the every linear $k$-graph with maximum degree $d$ has an equitable $r$-coloring, whenever $r>c(d/\log d)^{1/(k-1)}$.
\end{conjecture}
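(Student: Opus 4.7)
The plan is to parallel the three-step strategy of the proof of Theorem~\ref{main}, but to replace the one-shot Local Lemma partition of Step~1 by a nibble (semi-random) procedure in the spirit of Johansson's theorem and its linear hypergraph extension by Frieze and Mubayi. The motivation is exactly the bottleneck flagged after Conjecture~\ref{k}: a single random coloring wastes the logarithmic saving that the semi-random method is designed to extract. The decisive structural input is linearity itself: if $\mathcal H$ is linear, then for every vertex $v$ the link $L_v = \{e \setminus \{v\} : v \in e \in \mathcal H\}$ is a \emph{matching} of $(k-1)$-sets, since any two hyperedges through $v$ meet only in $v$. Thus neighborhoods are maximally sparse and are well-suited to iterated random coloring.

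First, I would run a nibble to produce a partition $V = X_1 \cup \cdots \cup X_t$ with $t = \Theta((d/\log d)^{1/(k-1)})$ satisfying two invariants: (i) $\mathcal H[X_i \cup X_j]$ admits a cover graph $H_{ij}$ of maximum degree at most $p = \mathrm{polylog}(d)$; and (ii) an appropriate strong/weak edge dichotomy generalizing the $k=3$ case holds, so that few weak hyperedges are trapped inside any fixed $X_i$. At each round one tentatively colors a random $\varepsilon$-fraction of the still-uncolored vertices, rolls back monochromatic hyperedges and vertices whose local weak-edge density jumps, and iterates. Linearity (codegree at most $1$) is what lets the standard martingale or Talagrand-type concentration used in Frieze-Mubayi give the expected logarithmic saving over $O(\log d)$ rounds.

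Second, Steps~2 and~3 should carry over with only moderate modifications. In the $k$-graph analog of Lemma~\ref{choice}, a $k$-edge $e$ becomes monochromatic with probability at most $t_1 \prod_{v \in e \cap S} |I(v)|^{-1}$, which for edges with all $k$ vertices in $S$ is at most $O(1/r^{k-1}) = O(\log d/d)$. Since the ordinary Local Lemma at dependency degree $O(d)$ does not quite close this, one would use an asymmetric or weighted variant and exploit that the ``dangerous'' all-in-$S$ edges are rare because $|S| \ll |V|$; alternatively, one could route through Moser-Tardos with a suitable potential function. Step~3 depends only on the Hajnal-Szemer\'edi theorem and the degree bound on the cover graphs $H[X_i]$ guaranteed by Step~1, so it requires no new probabilistic input beyond what Proposition~\ref{degree} already provides.

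The hardest part is Step~1. The Frieze-Mubayi nibble produces a proper coloring, which is strictly weaker than the partition-with-low-degree-cover that we need here. The central technical task is to augment the nibble with bookkeeping that maintains cover-graph degrees throughout: at each round one must simultaneously track the residual hyperedge densities through each vertex and the residual strong/weak edge counts inside each forming class, and apply a two-level concentration argument (Talagrand or iterated Kim-Vu) to preserve both invariants up to the end of the process. This coupling---forcing the nibble to produce a proper coloring and a low-degree cover at the same time---is where linearity will be used most crucially, and it is where I expect the main obstacle to lie.
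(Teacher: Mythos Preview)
The statement is Conjecture~\ref{fm}, which the paper poses as an open problem and does not attempt to prove; there is therefore no proof in the paper to compare your attempt against. Your submission is, by your own description, a research plan rather than a proof, and you correctly identify Step~1---forcing a Frieze--Mubayi-style nibble to output not merely a proper coloring but a partition with low-degree pairwise covers---as a serious obstacle.

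There is, however, a more concrete gap in your treatment of Step~2 that you have underestimated. You analyse only the case in which all $k$ vertices of a bad edge lie in $S$, obtaining $P(B_e)=O(1/r^{k-1})=O((\log d)/d)$, and propose to close the residual logarithmic factor with an asymmetric Local Lemma or Moser--Tardos. But the truly dangerous configuration---precisely the one the paper isolates in the discussion preceding Conjecture~\ref{k}---is an edge with only two vertices $v,w\in S$ and the remaining $k-2$ in a single $W_i$: then $P(B_e)\approx 1/r^{2}=((\log d)/d)^{2/(k-1)}$, which for every $k\ge 4$ exceeds $1/d$ by a polynomial factor in $d$. Neither an asymmetric Local Lemma (the weighted criterion still fails by a power of $d$, since nothing prevents a constant fraction of the $O(d)$ neighbouring edges from having this same structure) nor Moser--Tardos (which is algorithmic and does not strengthen the existential bound) repairs this. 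Linearity does not keep $k-2$ vertices of an edge from landing together in one $W_i$, and your pairwise covers $H_{ij}$ say nothing about edges spread across three or more classes. A secondary issue: in a linear $k$-graph every pair of vertices has codegree at most one, so the strong/weak dichotomy you import from the $k=3$ argument is vacuous---there are no strong edges at all---and the cover construction in Step~1 therefore needs a genuinely new mechanism, not just added bookkeeping on top of the Frieze--Mubayi nibble.
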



\begin{thebibliography}{2}

\bibitem{CGH} K. Chandrasekaran, N. Goyal, B. Haeupler,
Deterministic Algorithms for the Lovasz Local Lemma, submitted,
preprint available at http://arxiv.org/abs/0908.0375

\bibitem{E63} P. Erd\H os,  On a combinatorial problem,  {\it Nordisk Mat. Tidskr.}  11 (1963), 5--10, 40

\bibitem{E64}P. Erd\H os,  On a combinatorial problem, II.,  {\it Acta Math. Acad. Sci. Hungar.}  15 (1964), 445--447

\bibitem{EL} P. Erd\H{o}s, L. Lov\'asz, Problems and results on 3-chromatic
hypergraphs and some related questions, In \emph{Infinite and Finite Sets},
A. Hajnal et al., editors, Colloq. Math. Soc. J. Bolyai 11, North
Holland, Amsterdam, (1975), 609--627.

\bibitem{FM1} A. Frieze, D. Mubayi, On the chromatic number of simple triangle-free triple systems  (with A. Frieze), {\it Electronic Journal of Combinatorics} 15 (2008), no. 1, Research Paper 121, 27 pp.

\bibitem{FM2} A. Frieze, D. Mubayi, Coloring simple hypergraphs, submitted.

\bibitem{J} A. Johansson, Asymptotic choice number for triangle-free graphs, DIMACS Technical Report 91-4, 1196.

\bibitem{Hajnal} A. Hajnal, E. Szemer\'edi, Proof of a conjecture of
P. Erdös, In: \textit{Combinatorial Theory and its Applications}(P.
Erdös, A. Rényi and V. T. Sós, eds), North-Holand, London, (1970),
601--623.

\bibitem{KK-HSz} H. A. Kierstead, A. V. Kostochka, A short proof
of the Hajnal-Szemer\'edi Theorem on equitable coloring, \textit{Combinatorics,
Probability and Computing}, 17 (2008), 265--270.

\bibitem{KK-O} H. A. Kierstead, A. V. Kostochka, An Ore-type theorem
on equitable coloring, \textit{Journal of Combinational Theory, Series
B}, 98 (2008), 226--234.

\bibitem{KKYbc}H. Kierstead, A. Kostochka and Gexin Yu, Ore-type
versus Dirac-type, in \emph{Surveys in Combinatorics 2009} (eds. S.
Huczynska, J. Mitchell and C. Roney-Dougal), London Mathematical Society
Lecture Note Series {396} Cambridge University Press, Cambridge
(2009), 113--136.

\bibitem{KKMS}H. Kierstead, A. Kostochka, M. Mydlarz and E. Szemer\'edi,
A fast algorithm for equitable coloring, \emph{Combinatorica} 30, to
appear.
\bibitem{KV} J. H. Kim and V. Vu, Concentration of multivariate
polynomials and its applications, \emph{Combinatorica} 20 (2000), 417--434.

 \bibitem{MT} R. Moser, G. Tardos, A constructive proof of the general Lovász Local Lemma, {\it Journal of the ACM} 57 (2010) (2) Art. 11, 15pp. Also available at arXiv:0903.0544.


\bibitem{MSz}M. Mydlaz and Szemer\'edi, private communication.

\end{thebibliography}
\end{document}